\def\be{\begin{equation}}
\def\ee{\end{equation}}
\def\bse{\begin{subequations}}
\def\ese{\end{subequations}}
\let\er\eqref
\let\be\beta
\newcommand{\R}{{\mathbb R}}
\newtheorem{theorem}{Theorem}
\newtheorem{lemma}[theorem]{Lemma}
\newtheorem{corollary}[theorem]{Corollary}
\def\bse{\begin{subequations}}
\def\ese{\end{subequations}}
\title{Adaptive dynamics of nonlocal competition models with heterogeneous resources}
\author{ {\bf\large Shen Bian}\thanks{Corresponding author. Email address: bianshen66@163.com}  \\
\
\\
{\it\small Department of Mathematical Sciences, Beijing University of Chemical Technology} \\
{\it\small Beijing, 100029, P.R. CHINA }\\
\vspace{1.0mm} }
\date{}
\begin{document}
\let\cleardoublepage\clearpage

\maketitle

\begin{abstract}
We investigate the long-time behavior of phenotype-structured models describing  evolutionary dynamics of asexual populations, and analyze the joint effects of nonlocal interactions and spatial resource distributions on the global dynamics of the two species. In the first part, we consider an integro-differential system without diffusion terms, where phenotypic changes are absent and the spatial distribution of resources for one species is heterogeneous while that of the other is homogeneous. Using an entropy method to address nonlocal interactions and resource heterogeneity, we prove that the species subject to heterogeneous resources converges to a Dirac mass concentrated at the peak of the phenotypic fitness landscape, which establishes the selection of the best adapted trait. Numerical experiments further provide a sufficient criterion to identify the positions of fitness peaks. In the second part, we extend our study to a nonlocal reaction-diffusion system involving a linear diffusion operator, where heritable phenotypic changes occur and both resources are spatially heterogeneous. Through an appropriate transformation, we overcome the challenges induced by resource heterogeneity and prove that the long-time limits of the two species under different interspecific competitive coefficients are given by distinct steady states of the parabolic system, with concentrations at the maxima of their respective resource functions. Numerical results confirm the predictions and further reveal phenomena beyond the theoretical analysis.
\end{abstract}

\section{Introduction}

In the theory of adaptive dynamics which focuses on phenotypic evolution, populations are structured by a parameter describing a biological, physiological or ecological characteristic of the individuals. When this characteristic is inherent to the individuals, we refer to it as a phenotypic trait (denoted by a continuous real variable $x$ below) \cite{P15,P06}. The main ingredient in this theory is the selection principle which favors the population with best adapted trait in terms of resource utilization, and such a trait is called an evolutionary stable strategy (ESS). The origin of this denomination comes from evolution theory \cite{MP73}, no mutant with a different trait can invade a population with the trait corresponding to an ESS. In many scenarios of biological interest, one can assume $x \in \Omega,$ where $\Omega$ is a smoothly bounded domain in $\R^d$ or the entire space $\R^d$ for $d \ge 1.$ A classical example of such models is a two-species competition system 
 \begin{align}\label{twolocal}
  \left\{
      \begin{array}{ll}
       u_t=a_1 \Delta u+u \left( m_1(x)-u-bv  \right), & \text{in}~~\Omega \times \R^+, \\
       v_t=a_2 \Delta v+v \left(  m_2(x)-cu-v  \right), & \text{in}~~\Omega \times \R^+, \\
       \partial_{\nu} u=\partial_{\nu} v =0,  & \text{on}~~\partial \Omega \times \R^+, \\
      (u,v)(x,0)=(u_0,v_0)(x), & \text{in}~~\Omega,
      \end{array}\right.
\end{align}
where $\Omega$ is a smoothly bounded domain, $u(x,t)$ and $v(x,t)$ denote the population densities of two competing species. The functions $m_1(x)$ and $m_2(x)$ are either the carrying capacities or intrinsic growth rates, which model the spatial distribution of resources for species $u$ and $v$, respectively. The constants $b>0, c>0$ measure the inter-specific competitive abilities, while the intra-specific competition coefficients are normalized to one.

When the resource is spatially homogeneous, namely both $m_1(x)$ and $m_2(x)$ are positive constants, the global asymptotic dynamics of \er{twolocal} crucially depend on the reaction parameters and have been extensively studied, see \cite{B80,J10,LN96,H78,Z82} and \cite{G77,H78} (in the absence of diffusion terms). Precisely, the positive coexistence equilibrium is globally asymptotically stable if $c<\frac{m_2}{m_1}<\frac{1}{b}$, while competitive exclusion equilibrium is globally asymptotically stable if $\min\left(c,\frac{1}{b}\right)>\frac{m_2}{m_1}$ or $\max\left(c,\frac{1}{b}\right)<\frac{m_2}{m_1}$. Moreover, if $\frac{1}{b}<\frac{m_2}{m_1}<c,$ the two exclusion equilibria are locally stable (see \cite{LN96,IM98}).

When at least one of $m_1(x)$ and $m_2(x)$ is non-constant (the resource is spatially heterogeneous), a series of works provide a complete classification on the global asymptotic ability of the positive coexistence equilibrium and the competitive exclusion equilibrium of \er{twolocal}, see \cite{D98} for the case $b=c=1$ and further study in \cite{L06} for $b<1, c<1$. More complete parameter regimes for $b,c$ and carrying capacities $m_1(x),m_2(x)$ have been studied in a series of important works \cite{LN12,HN16I,HN16II,HN17,W25,GT23,GT24}.

We focus here on the case where, in the absence of phenotypic changes, the evolution of population densities is governed by an integro-differential system of the form
\begin{align}\label{uvnonlocal}
  \left\{
      \begin{array}{ll}
       u_t=u \left( d(x)-r_1(t)-b r_2(t)  \right), & (x,t) ~\text{in}~\R \times \R^+, \\
       v_t=v \left( \overline{m}-c r_1(t)-r_2(t) \right), & (x,t) ~\text{in}~\R \times \R^+, \\
      r_1(t)= \int_{\R}udx,~~r_2(t)=\int_{\R} v dx, & (x,t) ~\text{in}~\R \times \R^+, \\  
      (u,v)(x,0)=(u_0,v_0)(x) \ge 0, & x ~\text{in}~\R.
      \end{array}\right.
\end{align}
Here we take up the case where the spatial distribution of resources for one of the competing species is heterogeneous, while that of the other is uniform. The function $d(x)$ denotes the net per capita growth rate of individuals in the phenotypic state $x$, while $\overline{m}$ is the constant reproduction rate of $v.$ The terms $d(x)-r_1(t)-b r_2(t)$ and $\overline{m}-c r_1(t)-r_2(t)$ can be interpreted as the phenotypic fitness landscapes of species $u,v$, respectively, under the environment created by the total populations. 

In ecological and biological scenarios whereby the effects of heritable, spontaneous phenotypic changes need to be taken into account, a linear diffusion operator can be included in the integro-differential equations \er{uvnonlocal}. Here we take up the case where the spatial distribution of resources for two competing species is heterogeneous, which leads to non-local parabolic PDEs of the form
\begin{align}\label{uvnonlocaldiffusion}
  \left\{
      \begin{array}{ll}
       u_t=\Delta u+u \left( d(x)-r_1(t)-b r_2(t)  \right), & (x,t) ~\text{in}~\R \times \R^+, \\
       v_t=\Delta v+v \left( m(x)-c r_1(t)-r_2(t) \right), & (x,t) ~\text{in}~\R \times \R^+, \\
      r_1(t)= \int_{\R}udx,~~ r_2(t)= \int_{\R} v dx, & (x,t) ~\text{in}~\R \times \R^+, \\
      (u,v)(x,0)=(u_0,v_0)(x) \ge 0, & x ~\text{in}~\R,
      \end{array}\right.
\end{align}
where the diffusion terms model phenotypic changes and take into account mutations, the functions $d(x), m(x)$ represent the nonlocal carrying capacities or intrinsic growth rates, which reflect the environmental resources shared by species $u$ and $v$, respectively. The terms 
$d(x)-r_1(t)-b r_2(t)$ and $m(x)-c r_1(t)-r_2(t)$ describe the per-capita growth rates, capturing the combined effects of natural selection and competition.

In the framework of system \er{uvnonlocal} or \er{uvnonlocaldiffusion}, the maximum points of the functions $d(x), m(x)$ correspond to the fitness peaks (i.e. the peaks of the phenotypic fitness landscape of the populations). Moreover, the saturating terms $-r_1(t), -r_2(t)$ model the limitations on population growth imposed by carrying capacity constants. 

It is worthwhile to mention some aspects concerning the motivation of this paper. On one hand, nonlocal reaction terms can describe Darwinian evolution of a structured population density or the behavior of cancer cells with therapy (e.g. polychemotherapy and chemotherapy) \cite{L11,LL15}. On the other hand, whilst quite a number of works deal with models with local reaction terms in bounded domains, there is a paucity of literature concerning the case where nonlocal reaction terms are prescribed, with the exception of the asymptotic results for single species presented in \cite{BC12,BG16,CC07,DJM05,IM18,JS23,LC19,L20,LP21,L24,P06,PB08,PC18,PT18}. 

Nonlocal equations can create concentration effects leading to spikes. The purpose of this work is to illustrate the influences of nonlocal reaction terms and the inter-specific competition abilities on the dynamics of \er{uvnonlocal}, and to present the differences in the population dynamics between one species whose spatial resource distribution is heterogenous and the other whose spatial resource distribution is homogeneous. Additionally, we aim to explore the differences in long-time behavior between solutions to systems \er{uvnonlocal} and \er{uvnonlocaldiffusion}, and to investigate whether the presence of the diffusion terms changes the trend of natural selection. 

The remainder of the paper is organized as follows. In Section \ref{sec2}, we present the main assumptions and necessary preliminaries. Section \ref{sec3} is devoted to the asymptotic behavior of solutions to \er{uvnonlocal} (see Theorem \ref{th1} and Corollary \ref{coro1}), thereby establishing a selection principle. Section \ref{sec4} characterizes the long-time asymptotic behavior of solutions to \er{uvnonlocaldiffusion} (see Theorem \ref{nonlocaldiffusion}).

\section{Preliminaries} \label{sec2}

In this section, we compile some preparatory assumptions and lemmas that will be used in the proof of the long-time asymptotic behavior of solutions to \er{uvnonlocal} and \er{uvnonlocaldiffusion}.

Throughout this paper, we assume that the initial data satisfy
\begin{align}\label{assum1}
0<r_1(0)=\int_{\R} u_0(x)dx<\infty,~~0<r_2(0)=\int_{\R} v_0(x) dx<\infty.
\end{align}
The function $d(x)$ is assumed to be 
\begin{align}\label{assum2}
d(x) \in L^\infty(\R).
\end{align}

Let us next review the classical Lotka-Volterra system for the two competing species:
\begin{align}\label{ODEs}
\left\{
  \begin{array}{ll}
    Y_t=Y \left( \overline{d}-Y-bX \right), & t>0, \\
    X_t=X \left( \overline{m}-cY-X  \right), & t>0 \\
    Y(0)=Y_0>0, X(0)=X_0>0,
  \end{array}
\right.
\end{align}
where $Y$ and $X$ are the population densities of two competing species, $\overline{d}, \overline{m}$ are the intrinsic growth rates. The system \er{ODEs} has four equilibria: the coexistence equilibrium 
$$P_1:=(Y^\ast,X^\ast)=\left(\frac{\bar{d}-b \overline{m}}{1-bc}, \frac{\overline{m}-c \bar{d}}{1-bc}\right),$$ 
the competitive exclusion equilibria
$$P_2:=\left( \overline{d},0 \right),\quad P_3:=\left( 0,\overline{m} \right)$$ 
and the unstable equilibrium $(0,0).$ The dynamics of system \er{ODEs} have been extensively studied in the literature \cite{B80,G77,J10,LN96,H78,Z82} and can be summarized in the following lemma.
\begin{lemma}\label{odestability}
Under the assumptions $b>0, c>0$, the following statements hold for system \er{ODEs}.
\begin{itemize}
  \item[\textbf{(i)}] If $c<\frac{\overline{m}}{\overline{d}}$ and $b<\frac{\overline{d}}{\overline{m}}$, then the positive coexistence equilibrium $P_1 \left(\frac{\bar{d}-b \overline{m}}{1-bc}, \frac{\overline{m}-c \bar{d}}{1-bc}\right)$ is globally asymptotically stable.
  \item[\textbf{(ii)}] If $c>\frac{\overline{m}}{\overline{d}}$ and $b<\frac{\overline{d}}{\overline{m}}$, then the competitive exclusion equilibrium $P_2 \left( \overline{d},0 \right)$ is globally asymptotically stable.
  \item[\textbf{(iii)}] If $c<\frac{\overline{m}}{\overline{d}}$ and $b>\frac{\overline{d}}{\overline{m}}$, then the competitive exclusion equilibrium $P_3 \left( 0,\overline{m} \right)$ is globally asymptotically stable.
  \item[\textbf{(iv)}] If $bc=1$ and $b \overline{m}=\overline{d},$ then \er{ODEs} has a compact global attractor consisting of a continuum of steady states $\left(\eta \overline{d}, (1-\eta) \overline{d}/b \right)$ for $\eta \in [0,1].$
  \item[\textbf{(v)}] If $c>\frac{\overline{m}}{\overline{d}}$ and $b>\frac{\overline{d}}{\overline{m}}$, then $P_1 \left(\frac{\overline{d}-b \overline{m}}{1-bc}, \frac{\overline{m}-c \overline{d}}{1-bc}\right)$ is unstable and $P_2 \left( \overline{d},0 \right), P_3 \left( 0,\overline{m} \right)$ are locally stable.
\end{itemize}
\end{lemma}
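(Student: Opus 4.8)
The five assertions are classical; the plan is to give a unified planar phase‑plane argument, treating \er{ODEs} as a competitive system on the closed first quadrant. Write $p(t)=(Y(t),X(t))$ and let $\Pi=\{(Y,X):Y>0,\ X>0\}$; ``globally asymptotically stable'' is meant relative to $\Pi$, i.e. for solutions with both species initially present. First I would record the elementary facts. Since $\{Y=0\}$ and $\{X=0\}$ are invariant, $\Pi$ is positively invariant, and the variation‑of‑constants formula gives $Y(t),X(t)>0$ for all $t>0$; from $\dot Y\le Y(\overline{d}-Y)$ and $\dot X\le X(\overline{m}-X)$ one obtains $\limsup_{t\to\infty}Y\le\overline{d}$ and $\limsup_{t\to\infty}X\le\overline{m}$, so every orbit is bounded, global in time, and has a nonempty compact connected $\omega$-limit set $\omega(p)\subset\overline{\Pi}$. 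Next, with the Dulac function $\varphi=1/(YX)$ one computes $\partial_Y(\varphi\dot Y)+\partial_X(\varphi\dot X)=-1/X-1/Y<0$ on $\Pi$, so by the Bendixson--Dulac criterion \er{ODEs} admits no periodic orbit in $\Pi$.

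Next I would analyse the equilibria via the Jacobian
\begin{equation*}
J(Y,X)=\begin{pmatrix}\overline{d}-2Y-bX & -bY\\ -cX & \overline{m}-cY-2X\end{pmatrix}.
\end{equation*}
At $(0,0)$ the eigenvalues are $\overline{d},\overline{m}>0$ (a repeller); at $P_2=(\overline{d},0)$ they are $-\overline{d}$ and $\overline{m}-c\overline{d}$; at $P_3=(0,\overline{m})$ they are $-\overline{m}$ and $\overline{d}-b\overline{m}$; and when $P_1\in\Pi$, the identities $\overline{d}-Y^\ast-bX^\ast=\overline{m}-cY^\ast-X^\ast=0$ give $J(P_1)=\begin{pmatrix}-Y^\ast & -bY^\ast\\ -cX^\ast & -X^\ast\end{pmatrix}$, with trace $-(Y^\ast+X^\ast)<0$ and determinant $Y^\ast X^\ast(1-bc)$. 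A sign check on $Y^\ast=\tfrac{\overline{d}-b\overline{m}}{1-bc}$, $X^\ast=\tfrac{\overline{m}-c\overline{d}}{1-bc}$ then yields: in case (i) (here $c\overline{d}<\overline{m}$ and $b\overline{m}<\overline{d}$, hence $bc<1$) $P_1\in\Pi$ is asymptotically stable while $P_2,P_3$ are saddles whose stable manifolds are the coordinate half‑axes; in case (ii) $P_1\notin\overline{\Pi}$, $P_2$ is a stable node and $P_3$ a saddle with stable manifold the $X$-axis; case (iii) is the mirror image under $(Y,X)\leftrightarrow(X,Y)$, $(b,\overline{d})\leftrightarrow(c,\overline{m})$; and in case (v) ($c\overline{d}>\overline{m}$, $b\overline{m}>\overline{d}$, hence $bc>1$) $P_1\in\Pi$ has negative determinant and is a saddle while $P_2$ and $P_3$ are both stable nodes --- which is precisely statement (v).

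For the global conclusions (i)--(iii) I would invoke the Poincaré--Bendixson theorem: orbits are bounded and, by the previous step, there are no periodic orbits, so for $p_0\in\Pi$ the set $\omega(p_0)$ is a single equilibrium or a heteroclinic cycle in $\overline{\Pi}$. The latter cannot occur: $(0,0)$ is a repeller and so belongs to no $\omega$-limit set but its own, while every remaining equilibrium arising in (i)--(iii) is either asymptotically stable (so no orbit emanates from it) or a saddle whose unique orbit leaving it into $\overline{\Pi}$ converges to an asymptotically stable equilibrium; hence the connecting orbits in $\overline{\Pi}$ contain no closed loop. Discarding also $(0,0)$ and the boundary saddles (whose stable manifolds lie on coordinate axes, hence miss $\Pi$), the only possibility left is $\omega(p_0)=\{P_1\}$ in case (i), $\{P_2\}$ in case (ii), $\{P_3\}$ in case (iii); combined with the local stability found above, this gives global asymptotic stability in $\Pi$. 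For case (i) one can alternatively bypass Poincaré--Bendixson with the Goh‑type Lyapunov function $V=c(Y-Y^\ast\ln Y)+b(X-X^\ast\ln X)$: direct substitution yields $\dot V=-c(Y-Y^\ast)^2-2bc(Y-Y^\ast)(X-X^\ast)-b(X-X^\ast)^2$, which is negative definite exactly because $bc<1$, and LaSalle's invariance principle concludes.

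Finally, for the degenerate case (iv), from $bc=1$ and $b\overline{m}=\overline{d}$ one also has $c\overline{d}=\overline{m}$, so both nullclines collapse onto the line $\ell=\{Y+bX=\overline{d}\}$, all of whose points are equilibria; parametrising $Y=\eta\overline{d}$ recovers exactly the family $(\eta\overline{d},(1-\eta)\overline{d}/b)$, $\eta\in[0,1]$. Setting $s=Y+bX$ one finds $\dot s=(Y+X)(\overline{d}-s)$, so $s(t)\to\overline{d}$; moreover $\tfrac{d}{dt}(\ln X-c\ln Y)=(\overline{m}-c\overline{d})+(bc-1)X=0$, so $X/Y^{c}\equiv K:=X_0/Y_0^{c}$ is conserved. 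The orbit therefore stays on the curve $\{X=KY^{c}\}$, along which $s$ is strictly increasing in $Y$; since $s\to\overline{d}$, $Y(t)$ tends to the unique root of $Y+bKY^{c}=\overline{d}$ and $X(t)$ to the corresponding value, so $\omega(p_0)$ is a single point of $\ell$. Letting $(Y_0,X_0)$ range over $\overline{\Pi}$ makes these limits sweep out all of $\ell$, which is thus the compact global attractor. I expect the genuinely delicate points to be the exclusion cases (ii)--(iii) --- there is no single Lyapunov function valid for all admissible $b,c$ (since $bc$ may exceed $1$), so the Bendixson--Dulac and Poincaré--Bendixson package together with the local analysis is really needed --- and case (iv), where the conserved quantity $X/Y^{c}$ is exactly the device that upgrades $s\to\overline{d}$ to convergence of the whole orbit.
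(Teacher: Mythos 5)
Your argument is correct, and it is worth noting that the paper does not actually prove Lemma~\ref{odestability} at all: it defers (i)--(iii) to the Lyapunov-functional proofs of Goh and Hastings \cite{G77,H78}, (iv) to \cite{HN16I}, and (v) to \cite{IM98,LN96}. What you supply is therefore a genuinely self-contained alternative. Your case (i) Lyapunov function $V=c(Y-Y^\ast\ln Y)+b(X-X^\ast\ln X)$ is precisely the Goh functional the paper alludes to, so there you coincide with the cited route; for (ii)--(iii) you replace the Lyapunov machinery by the Dulac function $1/(YX)$ plus Poincar\'e--Bendixson plus the local classification of equilibria, which is more elementary and, as you rightly observe, avoids the awkwardness that no single Goh-type functional covers the exclusion cases when $bc>1$. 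Your treatment of (iv) via the conserved quantity $X/Y^{c}$ together with the monotone quantity $s=Y+bX$ is a clean replacement for the argument in \cite{HN16I}, and (v) is pure linearization, exactly as the paper's own computation of $J(P_1)$ in \eqref{linearmatrix} suggests. Two small points you should tighten: in (iv), concluding $s(t)\to\overline{d}$ from $\dot s=(Y+X)(\overline{d}-s)$ requires noting that $Y+X\ge s/\max(1,b)$ and that $s$ is monotone toward $\overline{d}>0$, so the factor $Y+X$ stays bounded away from zero along nontrivial orbits; and your final sentence of (iv) establishes pointwise attraction of $\ell$ rather than attraction of bounded sets, which is what ``global attractor'' strictly means --- harmless here, but worth a remark. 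Neither issue is a genuine gap.
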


In \cite{G77,H78}, Lemma \ref{odestability} (i)-(iii) are proved by applying a Lyapunov functional method. For the proof of Lemma \ref{odestability} (iv) one can refer to \cite{HN16I} for more details. In the case of (v), the equilibrium $P_1=(Y^\ast,X^\ast)=\left(\frac{\overline{d}-b \overline{m}}{1-bc}, \frac{\overline{m}-c \bar{d}}{1-bc}\right) $ is the saddle point, the basins of attraction for the two stable equilibria $P_2,P_3$ are separated by the stable manifold of the saddle point, denoted as $X=h(Y)$ such that 
\begin{align}
\left\{
  \begin{array}{ll}
    \text{if}~X(0)<h(Y(0)),~\text{then}~\displaystyle \lim_{t \to \infty} (Y(t),X(t))=(\overline{d},0), \\
\text{if}~X(0)>h(Y(0)),~\text{then}~\displaystyle \lim_{t \to \infty} (Y(t),X(t))=(0,\overline{m}). \\
  \end{array}
\right.
\end{align}
The properties of the separatrix $X=h(Y)$ have been analyzed in \cite{IM98,LN96} as presented in the following lemma. 
\begin{lemma}\label{saddle}
Suppose that $c>\frac{\overline{m}}{\overline{d}}$ and $b>\frac{\overline{d}}{\overline{m}}$. Then there exists a monotone function $h(Y)$ defined on $[0,\infty)$ whose graph is the separatrix of \er{ODEs}. Moreover, the function satisfies
\begin{itemize}
  \item[(i)] $h(0)=0; h(Y^\ast)=X^\ast; $
  \item[(ii)] $h'(Y)>0 ~~\text{in}~~ [0,\infty)$;
  \item[(iii)] $\displaystyle \lim_{Y \to \infty} h(Y) =\infty;$
  \item[(iv)] $\left\{
                \begin{array}{ll}
                  h''(Y)<0, & \overline{d}>\overline{m}, \\
                  h''(Y)=0, & \overline{d}=\overline{m} \\
                  h''(Y)>0, & \overline{d}<\overline{m};
                \end{array}
              \right.$
\item[(v)] $Y (\overline{d}-Y-bh(Y)) h'(Y)=h(Y)(\overline{m}-cY-h(Y))$.
\end{itemize}
\end{lemma}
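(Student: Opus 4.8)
The plan is to realize the separatrix as (the closure of) the stable manifold $W^s(P_1)$ of the interior saddle and to read off (i)--(v) from an elementary phase–plane analysis of \eqref{ODEs}. First I would record the consequences of $c>\overline m/\overline d$, $b>\overline d/\overline m$: multiplying gives $bc>1$, and $\overline d-b\overline m<0$, $\overline m-c\overline d<0$, so $Y^\ast>0$, $X^\ast>0$; on the axes $\overline d>\overline m/c$ and $\overline m>\overline d/b$, so the nullclines $\ell_Y:\ Y+bX=\overline d$ and $\ell_X:\ cY+X=\overline m$ cross at $P_1$ inside the open first quadrant in the standard bistable configuration. Linearising at $P_1$ and using $\overline d-Y^\ast-bX^\ast=0=\overline m-cY^\ast-X^\ast$ gives
\[
J(P_1)=\begin{pmatrix}-Y^\ast & -bY^\ast\\ -cX^\ast & -X^\ast\end{pmatrix},
\qquad \det J(P_1)=Y^\ast X^\ast(1-bc)<0,
\]
so $P_1$ is a hyperbolic saddle; evaluating the characteristic polynomial at $-Y^\ast$ gives the value $-bc\,X^\ast Y^\ast<0$, hence the negative eigenvalue $\mu_-$ satisfies $\mu_-<-Y^\ast$ and a stable eigenvector may be chosen with both components positive. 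Thus $W^s(P_1)$ has strictly positive slope at $P_1$, and its two local branches leave $P_1$ into $\mathrm I:=\{\overline d-Y-bX>0,\ \overline m-cY-X>0\}$ and into $\mathrm{IV}:=\{\overline d-Y-bX<0,\ \overline m-cY-X<0\}$.

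Next I would establish the global shape. Writing $\mathrm{II}:=\{\overline d-Y-bX>0,\ \overline m-cY-X<0\}$ and $\mathrm{III}:=\{\overline d-Y-bX<0,\ \overline m-cY-X>0\}$, a direct check of the vector field on the relevant pieces of $\ell_Y$, $\ell_X$ and of the coordinate axes shows that $\mathrm{II},\mathrm{III}$ are bounded and positively invariant; since planar Lotka--Volterra competition has no periodic orbit (Dulac function $1/(XY)$) and no homoclinic loop, Poincar\'e--Bendixson forces every trajectory in $\overline{\mathrm{II}}$ to converge to $P_2$ and every trajectory in $\overline{\mathrm{III}}$ to converge to $P_3$. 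Because the stable eigendirection at $P_1$ points into $\mathrm I$ and $\mathrm{IV}$, no orbit lying in $\mathrm{II}\cup\mathrm{III}$ converges to $P_1$, and as $\mathrm I$ and $\mathrm{IV}$ abut only at $P_1$ the first branch of $W^s(P_1)$ stays in $\mathrm I\cup\{P_1\}$ and the second in $\mathrm{IV}\cup\{P_1\}$. The first branch is bounded and (no periodic orbits, no homoclinics, only $(0,0)$ and $P_1$ as equilibria in $\overline{\mathrm I}$) must have $\alpha$–limit the source $(0,0)$; on it $\dot Y>0$ and $\dot X>0$, so $X$ is a strictly increasing smooth function $h$ of $Y$ on $(0,Y^\ast)$, and letting $Y\to0^+$, $Y\to Y^{\ast-}$ gives $h(0)=0$, $h(Y^\ast)=X^\ast$. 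On the second branch $\dot Y<0$ and $\dot X<0$; traced backwards it cannot leave $\mathrm{IV}$ (it would enter the forward-invariant $\mathrm{II}$ or $\mathrm{III}$ and never reach $P_1$), so $Y\to\infty$ along it and again $X=h(Y)$ with $h$ strictly increasing on $[Y^\ast,\infty)$. This yields (i), (ii) for $Y>0$, and — noting $dX/dY=X(\overline m-cY-X)\big/\bigl(Y(\overline d-Y-bX)\bigr)\sim cX/Y$ as $Y\to\infty$, which precludes $X$ staying bounded — also (iii). Differentiating $X=h(Y)$ along the flow, $\dot X=h'(Y)\dot Y$, gives $h(Y)\bigl(\overline m-cY-h(Y)\bigr)=Y\bigl(\overline d-Y-bh(Y)\bigr)h'(Y)$, which is (v), valid at $Y=0,Y^\ast$ by continuity.

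The remaining point (iv), the sign of $h''$, is the step I expect to carry the real difficulty. Along each branch $h''=(\ddot X\dot Y-\dot X\ddot Y)/\dot Y^{3}$, and with $f=\overline d-Y-bX$, $g=\overline m-cY-X$ one has $\ddot Y=Y(f^{2}-Yf-bXg)$, $\ddot X=X(g^{2}-cYf-Xg)$, which reduces the question to the sign on the separatrix of $E:=fg\,(g-f+Y-X)+bXg^{2}-cYf^{2}$ (up to the positive factor $XY$ and the factor $\dot Y^{-3}$, which changes sign between the two branches at $P_1$). In the balanced case $\overline d=\overline m$ one checks directly that the line $X=\tfrac{c-1}{b-1}\,Y$ — here $b>1$, $c>1$ — solves (v) and passes through $P_1$, so it is the separatrix, giving $h''\equiv0$, and indeed along it $f\equiv g$ and $E\equiv0$. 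For $\overline d\neq\overline m$ the plan is to use (v) to eliminate $h'$ and show that $\operatorname{sign}h''$ is constant on the separatrix and equal to $\operatorname{sign}(\overline m-\overline d)$, either by a continuity/monotone-dependence argument anchored at the explicit balanced line or by a sub-/super-solution comparison applied to (v) on $(0,Y^\ast)$ and on $(Y^\ast,\infty)$ separately. Everything apart from this last sign bookkeeping is an immediate consequence of the monotone-flow picture above, so I regard (iv) as the main obstacle.
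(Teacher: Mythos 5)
The paper does not actually prove Lemma \ref{saddle}: it quotes the statement from \cite{IM98,LN96} and then only constructs a local quadratic approximation of $h$ near the saddle plus a numerical continuation, so there is no in-paper argument to measure you against. Taken on its own terms, your phase--plane construction is sound for most of the lemma: the sign bookkeeping ($bc>1$, $Y^\ast,X^\ast>0$), the saddle structure with $\det J(P_1)<0$, the localisation $\mu_-<-Y^\ast$ forcing a stable eigenvector of positive slope (this matches the paper's $k$ in \eqref{k}), the invariance of the wedges $\mathrm{II},\mathrm{III}$, the Dulac/Poincar\'e--Bendixson identification of the $\alpha$-limit of the branch in $\mathrm I$ with the origin, and the growth estimate $h\sim CY^{c}$ ruling out bounded $h$ at infinity all check out, and (v) is immediate from $\dot X=h'(Y)\dot Y$. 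This correctly delivers (i), (iii), (v) and (ii) for $Y\in(0,\infty)$ (at $Y=0$ the relation (v) forces $h\sim CY^{\overline m/\overline d}$, so $h'(0^+)$ is $0$ or $+\infty$ when $\overline d\neq\overline m$; that is a looseness of the quoted statement rather than of your argument, but it deserves a remark).

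The genuine gap is (iv). You reduce $\operatorname{sign}h''$ to the sign of $E=fg(g-f+Y-X)+bXg^{2}-cYf^{2}$ on the separatrix (modulo the sign flip of $\dot Y^{3}$ across $P_1$), verify the balanced case $\overline d=\overline m$ via the explicit invariant line $X=\frac{c-1}{b-1}Y$, and then only announce that for $\overline d\neq\overline m$ one should conclude ``by a continuity/monotone-dependence argument'' or ``a sub-/super-solution comparison.'' Neither is carried out, and neither is routine: $E$ vanishes at $P_1$ and at the origin, so a naive sign argument degenerates exactly at the two endpoints of each branch, and a continuation-from-the-balanced-case argument needs an a priori reason why $h''$ cannot change sign as $(\overline d,\overline m)$ is deformed --- which is essentially the statement being proved. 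To close this you would need, for instance, to show that the curve $X=\frac{c-1}{b-1}Y$ (or a suitable one-parameter family of lines/parabolas through $0$ and $P_1$) is a strict sub- or super-solution of the ODE in (v) when $\overline d\gtrless\overline m$ and combine this with uniqueness of the separatrix, in the spirit of \cite{IM98}. As written, (iv) is asserted, not proved, and since (iv) is the only part of the lemma that is not a soft consequence of the monotone phase portrait, the proposal is incomplete precisely where the content lies.
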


In order to describe the stable manifold near the saddle point, we solve $h(Y)$ by combining linearization analysis with nonlinear corrections. Firstly, we set
\begin{align}
\vec{F}(\vec{x})=\left(
                   \begin{array}{c}
                     f(Y,X) \\
                     g(Y,X) \\
                   \end{array}
                 \right)=\begin{pmatrix}
       Y \left( \overline{d}-Y-bX \right) \\
       X \left( \overline{m}-cY-X  \right)
        \end{pmatrix}
\end{align}
and compute its Jacobian matrix at $(Y^\ast,X^\ast)$ to yield 
\begin{align}\label{linearmatrix}
A:=\frac{\partial \vec{F}(Y,X)}{\partial (Y,X)} \Bigg| _{(Y^\ast,X^\ast)}=\begin{pmatrix}
                         \bar{d}-2Y-bX & -b Y \\
                         -cX & \overline{m}-cY-2X \\
                       \end{pmatrix} \Bigg| _{(Y^\ast,X^\ast)}=\begin{pmatrix}
                                       -Y^\ast & -b Y^\ast \\
                                       -cX^\ast & -X^\ast \\
                                     \end{pmatrix}.
\end{align} 
The eigenvalues of $A$ are then given by
\begin{align}
\lambda_{1,2}=\frac{-(X^\ast+Y^\ast)\pm \sqrt{(X^\ast)^2+(Y^\ast)^2+(4bc-2)X^\ast Y^\ast}}{2}.
\end{align}
Subsequently, the eigenvector corresponding to the stable eigenvalue 
\begin{align}
\lambda_1=\frac{-(X^\ast+Y^\ast)- \sqrt{(X^\ast)^2+(Y^\ast)^2+(4bc-2)X^\ast Y^\ast}}{2}<0 
\end{align}
is $\vec{v}=\left( \frac{-b Y^\ast}{Y^\ast +\lambda_1},1 \right)^T$. Therefore, the linear approximation of the separatrix near the saddle point can be expressed as 
\begin{align}
X-X^\ast=k (Y-Y^\ast),
\end{align}
where 
\begin{align}\label{k}
k&=\frac{Y^\ast +\lambda_1}{-b Y^\ast}=\frac{-cX^\ast}{X^\ast+\lambda_1} \nonumber \\
&=\frac{Y^\ast-X^\ast- \sqrt{(X^\ast)^2+(Y^\ast)^2+(4bc-2)X^\ast Y^\ast} }{-2bY^\ast}.
\end{align}
Secondly, we look for the higher-order analytical approximation of $h(Y)$ subject to $h(Y^\ast)=X^\ast$ and $h'(Y^\ast)=k$. We expand $h(Y)$ as
\begin{align}
h(Y)=X^\ast+k(Y-Y^\ast)+a_2 (Y-Y^\ast)^2+O(|Y-Y^\ast|^3)
\end{align}
near $Y=Y^\ast.$ Substituting it into system \er{ODEs} along the stable manifold, we arrive at
\begin{align}\label{hyode}
\frac{d h(Y)}{dY} f(Y,h(Y))=g(Y,h(Y)).
\end{align}
On the other hand, $ f(Y,h(Y))$ and $g(Y,h(Y))$ are expanded as
\begin{align*}
 f(Y,h(Y))&=(-b k Y^\ast-Y^\ast) (Y-Y^\ast)+(-a_2 b Y^\ast-bk-1) (Y-Y^\ast)^2+O(|Y-Y^\ast|^3), \\
g(Y,h(Y))&=(-cX^\ast-kX^\ast) (Y-Y^\ast)+(-a_2 X^\ast-ck-k^2) (Y-Y^\ast)^2+O(|Y-Y^\ast|^3).
\end{align*}
Equating the coefficients termwise, we find 
\begin{align}
a_2=\frac{(c-1)k+(1-b)k^2}{-Y^\ast-X^\ast-3 \lambda_1}.
\end{align}
Finally, the stable manifold near the saddle point can be approximated as 
\begin{align}\label{secondappro}
X=X^\ast+k(Y-Y^\ast)+a_2 (Y-Y^\ast)^2.
\end{align}

The global stable manifold can be solved by numerical simulations. First of all, the initial data can be chosen near the saddle point along the direction of the stable eigenvector, namely
\begin{align}
(Y_0,X_0)^T=(Y^\ast,X^\ast)^T \pm \varepsilon \vec{v}.
\end{align}
Then, we numerically compute the backward-time solution of the initial value problem \er{ODEs} via the Runge-Kutta method. The resulting trajectory is the global stable manifold of the saddle point, which is the separatrix of the basins of attraction of the two stable equilibria $P_2$ and $P_3$. Moreover, the quadratic approximation \er{secondappro} is in good agreement with the global stable manifold near the saddle point (see the blue solid line and the red solid line plotted in Figure \ref{bistablecase}).


\section{Long-time asymptotic behavior of the Cauchy problem \er{uvnonlocal}} \label{sec3}

In this section, we investigate the asymptotic behavior of solutions to the cauchy problem \er{uvnonlocal}. Under assumptions \er{assum1}-\er{assum2}, there exists a unique non-negative mild solution $u(x,t) \in C\left( \R_+;L^1(\R) \right)$ of the Cauchy problem \er{uvnonlocal} \cite{DJM08,P06}. By solving the first equation of \er{uvnonlocal}, we directly obtain the semi-explicit formula for $u(x,t)$ 
\begin{align}\label{ux}
u(x,t)=u_0(x) exp \left(\int_0^t d(x)-r_1(s)-b r_2(s) ds\right)
\end{align}
which implies that the support of $u(x,t)$ remains unchanged over time. This property also holds for $v(x,t)$. We now define
\begin{align}\label{assum0}
\Omega:=supp u_0(x)=\{x \in \R ~ |~ u_0(x)>0 \}.
\end{align}
We assume that $d(x) \in C(\R)$ and there exist $d_m, d_M$ such that
\begin{align}\label{assum20}
\left\{
  \begin{array}{ll}
    0<d_m<d(x)<d_M, \\
    d_M:=\max_{x \in \Omega} d(x)~~\text{is attained for a single} ~~\overline{x} \in \Omega
  \end{array}
\right.
\end{align}
and there exists a pair $\left(r_1^\ast,r_2^\ast\right) \in \R^+ \times \R^+$ such that
\begin{align}\label{assum3}
\begin{array}{c}
\left\{
  \begin{array}{ll}
   d_M=r_1^\ast+b r_2^\ast, \\
\overline{m}=c r_1^\ast+r_2^\ast.
  \end{array}
\right. 
\end{array}
\end{align}
If $\Omega$ is unbounded, we also require that there exists $R>0$ such that as $(r_1,r_2) \to \left(r_1^\ast,r_2^\ast \right)$,
\begin{align}\label{assum4}
\beta_R:=\max_{|x| \ge R} \left[d(x)-r_1- b r_2 \right]<0.
\end{align}

\subsection{Asymptotic analysis}\label{sec31}

Motivated by Lemma \ref{odestability}, we shall primarily focus on the coexistence state of the two species for \er{uvnonlocal}. Setting
\begin{align}
r_1^\ast& :=\frac{d_M-b \overline{m}}{1-bc}, \\
r_2^\ast &:=\overline{m}-cr_1^\ast =\frac{\overline{m}-c d_M}{1-bc},
\end{align}
we will show that the limit measures of $u(x,t)$ and $v(x,t)$ are uniquely determined by the initial conditions, the function $d(x)$ and the constant $\overline{m}$, which entails the convergence of $u(\cdot,t)$ to the best adapted trait (the peak of the fitness) and of $v(\cdot,t)$ to a multiple of the initial condition as $t \to \infty.$ These results are stated in the following theorem.

\begin{theorem}\label{th1}
Under assumptions \er{assum1}, \er{assum20}-\er{assum4}. If $b>0, c>0$ and
\begin{align}\label{bcassum}
b<\frac{d_M}{\overline{m}}, ~~c<\frac{\overline{m}}{d_M}
\end{align}
hold, then the solutions to \er{uvnonlocal} will coexist and satisfy
\begin{align}
&r_1(t) \to r_1^\ast, \quad u(x,t) \rightharpoonup r_1^\ast \delta(x-\overline{x}),~~ \text{as}~~ t \to \infty, \\
&r_2(t) \to r_2^\ast, \quad v(x,t) \to \frac{r_2^\ast}{r_2(0)} v_0(x)~~\text{as}~~t \to \infty.
\end{align}
\end{theorem}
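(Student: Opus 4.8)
The plan is to reduce the asymptotics of the integro-differential system to the Lotka--Volterra ODE system \er{ODEs} by tracking the total masses $r_1(t),r_2(t)$, and to use an entropy/Lyapunov functional to force concentration of $u$ at $\overline{x}$. First I would derive closed(ish) equations for $r_1,r_2$: differentiating $r_1(t)=\int_\R u\,dx$ and using the first equation of \er{uvnonlocal} gives $r_1'(t)=\int_\R u\,(d(x)-r_1-br_2)\,dx = \big(\langle d\rangle_t - r_1 - br_2\big) r_1$, where $\langle d\rangle_t := \tfrac1{r_1(t)}\int_\R d(x)u(x,t)\,dx \in [d_m,d_M]$ is a weighted average of $d$; similarly $r_2'(t)=r_2(\overline m - cr_1 - r_2)$ exactly. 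So $(r_1,r_2)$ solves a \emph{nonautonomous} perturbation of \er{ODEs} with $\overline d$ replaced by the time-dependent quantity $\langle d\rangle_t \le d_M$. The key point is that $\langle d\rangle_t \to d_M$, and once that is known, comparison/asymptotic-autonomy arguments pin $(r_1,r_2)\to(r_1^\ast,r_2^\ast)$ via Lemma \ref{odestability}(i) (whose hypotheses are exactly \er{bcassum}).

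The heart of the matter is therefore the concentration of $u$, i.e. showing $\langle d\rangle_t\to d_M$ and upgrading this to weak convergence $u(\cdot,t)\rightharpoonup r_1^\ast\delta(x-\overline x)$. Here I would use the entropy method: from the semi-explicit formula \er{ux}, write $u(x,t)=u_0(x)\exp\!\big(\int_0^t (d(x)-r_1(s)-br_2(s))\,ds\big)$, so for any two traits $x,y\in\Omega$ the ratio $u(x,t)/u(y,t) = \tfrac{u_0(x)}{u_0(y)}\exp\!\big((d(x)-d(y))\,t\big)$. Thus mass moves exponentially fast from traits with smaller $d$ to traits with larger $d$; because the maximum $d_M$ is attained at the single point $\overline x$, all the mass must accumulate near $\overline x$. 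To make this rigorous I would: (a) fix $\eps>0$ and show $\int_{\{|d(x)-d_M|>\eps\}} u(x,t)\,dx\to 0$, using that on this set $d(x)-r_1(s)-br_2(s)$ is eventually bounded above by $-\eps/2$ (since $r_1+br_2\to d_M$), together with \er{assum4} to control the unbounded part of $\Omega$; (b) conclude $\langle d\rangle_t\to d_M$, closing the loop with the ODE analysis to get $r_1(t)\to r_1^\ast$; (c) since $\mathrm{supp}\,u(\cdot,t)\subseteq\Omega$ is fixed, $r_1(t)$ is bounded, and the mass outside any neighborhood of $\overline x$ vanishes, deduce $u(\cdot,t)\rightharpoonup r_1^\ast\delta(x-\overline x)$ against $C_b(\R)$ test functions. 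For $v$, the second equation of \er{uvnonlocal} has no $x$-dependence in its bracket, so $v(x,t)=v_0(x)\exp\!\big(\int_0^t(\overline m - cr_1(s)-r_2(s))\,ds\big)$, i.e. $v(x,t)=\tfrac{r_2(t)}{r_2(0)}v_0(x)$ for all $t$; hence $r_2(t)\to r_2^\ast$ immediately yields $v(x,t)\to \tfrac{r_2^\ast}{r_2(0)}v_0(x)$ in $L^1$.

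I would organize the argument as a bootstrap. Step 1: a priori bounds --- using \er{assum20} and \er{assum4}, show $r_1(t),r_2(t)$ stay in a fixed compact subset of $(0,\infty)^2$ for all $t\ge 0$ (upper bounds from the negativity of growth rates once the masses are large, lower bounds from $d_m>0$ and $\overline m>0$ and the exact equation for $r_2$), so no extinction and no blow-up. Step 2: prove $r_1(s)+br_2(s)\to d_M$ --- this is the delicate coupling; I expect this to be the main obstacle. One route: suppose along a sequence $t_n$ that $r_1(t_n)+br_2(t_n)\le d_M-\eps$; then on a short time interval after $t_n$ the trait $\overline x$ (and a neighborhood) grows, pumping mass into $r_1$ and raising $\langle d\rangle_t$ toward $d_M$, which via the $r_1$-equation pushes $r_1+br_2$ up --- a Lyapunov functional such as $\int_\R u\,|d(x)-d_M|\,dx$ or a relative-entropy quantity $\int_\R u_0\,\phi(u/u_0)$ combined with the LV Lyapunov function from \cite{G77,H78} should make this monotone-in-the-limit. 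Step 3: once $r_1+br_2\to d_M$ and $cr_1+r_2\to\overline m$ (the latter from the exact $r_2$ ODE plus asymptotic autonomy), solve the linear system \er{assum3} to get $(r_1,r_2)\to(r_1^\ast,r_2^\ast)$; note \er{bcassum} guarantees $1-bc>0$ and both components positive. Step 4: concentration of $u$ and the $v$ formula as above. The routine parts (well-posedness, the explicit ODE for $r_2$, the linear algebra in Step 3) I would state briefly; the real work, and the place where the heterogeneity of $d$ genuinely interacts with the nonlocal competition, is Step 2.
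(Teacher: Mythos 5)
Your overall architecture (a priori bounds, convergence of the masses, concentration of $u$, the exact formula $v(x,t)=\frac{r_2(t)}{r_2(0)}v_0(x)$) matches the paper's, and your Steps 1, 3 and 4 are essentially what the paper does in its Steps 1, 4, 5 and 6. The genuine gap is your Step 2, which you yourself flag as ``the main obstacle'': you never actually prove that $(r_1,r_2)$ converges. The route you sketch in (a)--(b) is circular --- in (a) you bound $d(x)-r_1(s)-br_2(s)$ above by $-\eps/2$ on $\{|d(x)-d_M|>\eps\}$ ``since $r_1+br_2\to d_M$'', but that limit is precisely what (b) is supposed to deliver --- and neither candidate functional you name is monotone: for instance $\frac{d}{dt}\int_\R u\,|d(x)-d_M|\,dx=\int_\R |d(x)-d_M|\,(d(x)-r_1-br_2)\,u\,dx$ has no sign. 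The paper resolves this with a specific Lyapunov functional, $\int_\R\big(d(x)-P(r_1,r_2)\big)u\,dx$ with $P(r_1,r_2)=\frac{cr_1^2+2bcr_1r_2+b(r_2-\overline m)^2}{2cr_1}$, whose time derivative equals $I_1+I_2$ with $I_1=\int_\R(d-r_1-br_2)^2u\,dx$ and $I_2=r_2(cr_1+r_2-\overline m)^2$; boundedness gives $\int_0^\infty(I_1+I_2)\,dt<\infty$, the Barbalat-type estimate $|\frac{d}{dt}(I_1+I_2)|\le C(I_1+I_2)$ upgrades this to $I_1+I_2\to 0$, and the convergence of the functional itself then forces $r_1(t)$ to have a limit, identified as $r_1^\ast$ by exactly the extinction-versus-exponential-growth dichotomy you also invoke. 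Constructing this $P$ is the technical heart of the theorem, and your proposal supplies neither it nor an equivalent.

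Ironically, your own ratio formula already contains a non-circular fix that you do not exploit. Because the nonlocal terms are $x$-independent, \er{ux} gives $u(x,t)/r_1(t)=u_0(x)e^{d(x)t}\big/\int_\R u_0(y)e^{d(y)t}\,dy$ \emph{exactly}, so a Laplace-type estimate (numerator at most $e^{(d_M-\eps)t}r_1(0)$ on $\{d\le d_M-\eps\}$, denominator at least $e^{(d_M-\eps/2)t}\int_{\{d>d_M-\eps/2\}}u_0\,dx>0$) shows that the normalized measure concentrates at $\overline x$ and that $\langle d\rangle_t\to d_M$, with no reference to the mass dynamics whatsoever. With that in hand, $(r_1,r_2)$ solves an asymptotically autonomous planar system whose limit is \er{ODEs} with $\overline d=d_M$, and Lemma \ref{odestability}(i) under \er{bcassum}, combined with a persistence argument ruling out the boundary equilibria $(d_M,0)$, $(0,\overline m)$, $(0,0)$ from the omega-limit set, would yield $(r_1,r_2)\to(r_1^\ast,r_2^\ast)$. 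That would be a genuinely different and arguably more elementary proof than the paper's entropy functional; but as written your proposal completes neither version, so the central step remains open.
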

\begin{proof}
The proof is divided into several steps. We first provide a priori estimates for $r_1(t)$ and $r_2(t)$. Subsequently, a Lyapunov functional is constructed to establish the existence of the long-time limits for $r_1(t)$ and $r_2(t)$. Finally, the weak limits of $u(x,t)$ and $v(x,t)$ are rigorously identified, respectively.

{\it\textbf{Step 1}} (A priori estimates for $r_1(t)$ and $r_2(t)$) \quad From \er{ux}, we conclude that the support of $u(x,\cdot)$ remains unchanged over time, namely
\begin{align}\label{support}
\Omega=\{x \in \R ~\big |~ u(x,t)>0 \}=\{x \in \R ~\big |~ u_0(x)>0 \},~~\forall ~t > 0.
\end{align}
Under assumption \er{assum20}, we integrate system \er{uvnonlocal} over $\R$ to obtain
\begin{align}\label{uvnonlocal1}
\left\{
  \begin{array}{ll}
    \frac{d}{dt}r_1(t) \le r_1(t) \left( d_M-r_1(t)-b r_2(t)  \right), \\[1mm]
  \frac{d}{dt}r_2(t)=r_2(t) \left( \overline{m}-c r_1(t)-r_2(t)  \right), \\[1mm]
  0<r_1(0)<\infty, 0<r_2(0)<\infty.
  \end{array}
\right.
\end{align}
Then \er{bcassum} allows us to apply phase plane analysis to the ODE system solved by $r_1(t)$ and $r_2(t)$. We thereby obtain that for all times,
\begin{align}\label{r1r2bdd}
\left\{
  \begin{array}{ll}
  0 < r_1(t) \le \max(d_M,r_1(0)), \\
0 < r_2(t) \le \max(\overline{m},r_2(0)).
  \end{array}
\right.
\end{align}

{\it\textbf{Step 2}} (A Lyapunov functional) \quad In this step, a Lyapunov functional is constructed to establish the convergence of $r_1(t)$. We consider a function $P(r_1,r_2)$ satisfying
\begin{align}
P(r_1,r_2):=\frac{cr_1^2+2bcr_1r_2+b(r_2-\overline{m})^2}{2cr_1},
\end{align}
then it follows that
\begin{align}
P+r_1 P_{r_1}&=r_1+br_2, \\
r_1 P_{r_2}&=\frac{b}{c} \left( cr_1+r_2-\overline{m} \right).
\end{align}
Subsequently, direct calculations yield that
\begin{align*}
&\frac{d}{dt} \int_{\R} \left( d(x)-P(r_1,r_2)  \right) udx \\
=&\int_{\R} \left( d(x)-P-r_1 P_{r_1}  \right)\left( d(x)-r_1-br_2  \right) u  dx-\int_{\R} P_{r_2} r_1 v \left( \overline{m}-c r_1-r_2 \right) dx \\
=& \int_{\R} \left( d(x)-r_1-br_2  \right)^2 u  dx+\left( cr_1+r_2-\overline{m} \right) P_{r_2} r_1 r_2 \\
=& \int_{\R} \left( d(x)-r_1-br_2  \right)^2 u  dx+ \frac{b}{c}\left( cr_1+r_2-\overline{m} \right)^2 r_2 \ge 0.
\end{align*}
Define
\begin{align*}
I_1&:=\int_{\R} \left( d(x)-r_1-br_2  \right)^2 u  dx, \\
I_2&:=r_2 \left( cr_1+r_2-\overline{m} \right)^2.
\end{align*}
As a consequence, the bounded quantity $\int_{\R} \left( d(x)-P(r_1,r_2)  \right) udx$ is increasing in time and thus converges as $t \to \infty.$ Hence we have
\begin{align}\label{limitL}
\int_{\R} \left( d(x)-P(r_1,r_2)  \right) udx \xrightarrow[t \to \infty]{} L \in \R
\end{align}
and
\begin{align}\label{condition1}
\int_0^\infty I_1+I_2 dt <\infty.
\end{align}

{\it\textbf{Step 3}} (Existence of the limit for $r_1(t)$) \quad In this step, we shall derive the limits of $r_1(t)$ and $r_2(t)$. To this end, we compute
\begin{align*}
&\frac{d}{dt} (I_1+I_2) \\
=& \frac{d}{dt} \int_{\R} \left( d(x)-r_1-br_2  \right)^2 u  dx+ \frac{d}{dt} \left[r_2 \left( cr_1+r_2-\overline{m} \right)^2 \right] \\
=& \int_{\R} \left( d(x)-r_1-br_2  \right)^2 \left( d(x)-r_1-br_2  \right) u  dx-2 \int_{\R} \left( d(x)-r_1-br_2  \right) u  dx  \int_{\R} \left( d(x)-r_1-br_2  \right) u  dx \\
&+2b r_2 \left( cr_1+r_2-\overline{m} \right)  \int_{\R} \left( d(x)-r_1-br_2  \right) u  dx+r_2 \left( \overline{m}-cr_1-r_2 \right) \left( cr_1+r_2-\overline{m} \right)^2 \\
&+2r_2^2 \left( cr_1+r_2-\overline{m} \right)^2+ 2c r_2 \left( cr_1+r_2-\overline{m} \right)  \int_{\R} \left( d(x)-r_1-br_2  \right) u  dx \\
=&\int_{\R} \left( d(x)-r_1-br_2  \right)^2 \left( d(x)-r_1-br_2  \right) u  dx-2 \int_{\R} \left( d(x)-r_1-br_2  \right) u  dx  \int_{\R} \left( d(x)-r_1-br_2  \right) u  dx \\
&+ r_2 \left( \overline{m}-cr_1+r_2 \right) \left( cr_1+r_2-\overline{m} \right)^2+2(b+c) r_2 \left( cr_1+r_2-\overline{m} \right)  \int_{\R} \left( d(x)-r_1-br_2  \right) u  dx.
\end{align*}
Therefore, applying Cauchy-Schwarz inequality for the second term and the fourth term in the right-hand side yields
\begin{align*}
\left(\int_{\R} \left( d(x)-r_1-br_2  \right) u  dx\right)^2
=& \left(\int_{\R} \left( d(x)-r_1-br_2  \right) \sqrt{u} \sqrt{u}  dx\right)^2 \\
 \le & r_1(t) \int_{\R} \left( d(x)-r_1-br_2  \right)^2 udx.
\end{align*}
By the boundedness of $r_1,r_2,d(x),\overline{m}$, we deduce that
\begin{align}
\Big| \frac{d}{dt} (I_1+I_2)  \Big| \le C (I_1+I_2).
\end{align}
Thanks to the integrability in \er{condition1}, this gives the following estimate
\begin{align}\label{condition2}
\int_0^{\infty} \Bigg| \frac{d}{dt} (I_1+I_2) \Bigg| dt<\infty.
\end{align}
Hence, taking \er{condition1} and \er{condition2} together gives rise to
\begin{align}\label{limitI1I2}
\displaystyle \lim_{t \to \infty} (I_1+I_2)=0
\end{align}
and thus we have
\begin{align}
&\displaystyle \lim_{t \to \infty} \textstyle \int_{\R} \left( d(x)-r_1-br_2  \right)^2 u  dx=0, \\
&\displaystyle \lim_{t \to \infty} \textstyle \int_{\R} v \left( cr_1+r_2-\overline{m} \right)^2 dx=0.
\end{align}
Again, the use of Cauchy-Schwarz inequality results in
\begin{align}
& \textstyle \int_{\R} \big| d(x)-r_1-br_2 \big| u dx  \xrightarrow[t \to \infty]{} 0, \label{eq1} \\[1mm]
&r_2 \big| cr_1+r_2-\overline{m} \big| \xrightarrow[t \to \infty]{} 0. \label{eq2}
\end{align}
Combining this with \er{limitL}, we further arrive at
\begin{align}
\int_{\R} \left(d(x)-P(r_1,r_2) \right) u dx=\int_{\R} [d(x)-r_1-br_2]+[r_1+br_2-P(r_1,r_2)] u dx \xrightarrow[t \to \infty]{} L,
\end{align}
and thus $[r_1+br_2-P(r_1,r_2)] r_1 $ has a limit. Recalling
\begin{align}
r_2(cr_1+r_2-\overline{m}) \to 0 ~\text{as}~ t \to \infty, 
\end{align}
we finally conclude from \er{r1r2bdd} that
\begin{align}
\displaystyle \lim_{t \to \infty} r_1(t) &=\overline{r}_1, \\
\displaystyle \lim_{t \to \infty} r_2(t) &=\overline{m}-c \overline{r}_1=\overline{r}_2.
\end{align}

{\it\textbf{Step 4}} (Identifying $\overline{r}_1=r_1^\ast, \overline{r}_2=r_2^\ast$) \quad This step aims to identify that the limit of $r_1(t)$ is $r_1^\ast=\frac{d_M-b \overline{m}}{1-bc}$, which directly follows that the limit of $r_2(t)$ is $r_2^\ast=\overline{m}-c r_1^\ast.$ The proof proceeds by contradiction. Given $\overline{r}_2=\overline{m}-c \overline{r}_1,$ if $\overline{r}_1>\frac{d_M-b \overline{m}}{1-bc}$, then there exists $t_0$ such that for $t>t_0$ large enough, we obtain
\begin{align}\label{626}
\max_{x \in \Omega} \left( d(x)-r_1(t)-b r_2(t) \right)<\max_{x \in \Omega} \left( d(x)-r_1^\ast-b r_2^\ast \right)=0,
\end{align}
and \er{uvnonlocal1} implies extinction of $r_1(t)$ which contradicts with the assumption. Conversely, if $\overline{r}_1<\frac{d_M-b \overline{m}}{1-bc}$, then for time large enough, we find
\begin{align}
\max_{x \in \Omega} \left( d(x)-r_1(t)-b r_2(t) \right) >0,
\end{align}
and $r_1(t)$ admits exponential growth for those $x$s where $d(x)-r_1(t)-b r_2(t)>0$, which is again a contradiction. Collecting both cases we conclude that as $t \to \infty,$
\begin{align}\label{240520}
&r_1(t) \to r_1^\ast=\frac{d_M-b \overline{m}}{1-bc}, \\
&r_2(t) \to r_2^\ast=\overline{m}-cr_1^\ast.
\end{align}

{\it\textbf{Step 5}} (The limit for $u(x,t)$) \quad Recalling the fact \er{support}, if $\Omega$ is bounded, it directly follows that $u(x,t)$ converges weakly to a measure $u^\ast(x)$ endowed with  $r_1^\ast=\int_{\R} u^\ast(x) dx$. If $\Omega$ is unbounded, \er{assum4} ensures that for $t$ large enough,
\begin{align}
\frac{d}{dt} \int_{|x|>R} u(x,t)dx \le \beta_R \int_{|x|>R} u(x,t) dx,
\end{align}
and thereby $\sup_{t>0}\int_{|x|>R} u(x,t)dx \to 0$ for $R$ large enough. This indicates that the family $(u(x,t))_{t>0}$ is compact in the weak sense of measures. Hence there are subsequences $u(x,t_k)$ that converge weakly to measures $u^\ast(x)$ and $r_1^\ast=\int_{\R} u^\ast(x) dx$ as $k \to \infty.$

{\it\textbf{Step 6}} (Identification of the limits for $u(x,t)$ and $v(x,t)$) \quad Now we are ready to determine the limit for $u(x,t)$. It follows from \er{eq1} and \er{240520} that $u^\ast(x)$ concentrates on the set of points $x$s such that $d(x)-r_1^\ast-b r_2^\ast=0$. Furthermore, with the help of \er{assum3} and \er{assum4}, the point of concentration is unique and thus
\begin{align}\label{eq1018}
u^\ast(x)=r_1^\ast \delta(x-\overline{x}).
\end{align}
Therefore, the family $u(x,t)$ converges uniformly. Indeed, for $x \neq \overline{x}$, we have $d(x)<d(\overline{x})$ and
\[
u(x,t)=u_0(x)~e^{ \int_0^t \left(d(x)-r_1(s)-b r_2(s) \right) ds }\to 0,~~\text{as}~~ t \to \infty.
\]
In addition, integrating the second equation of \er{uvnonlocal} leads to
\begin{align}
r_2(t)=r_2(0) e^{\int_0^t \overline{m}-cr_1(s)-r_2(s) ds}.
\end{align}
Combining with the fact that
\begin{align}
v(x,t)=v_0(x) e^{\int_0^t \overline{m}-cr_1(s)-r_2(s) ds},
\end{align}
we thus end up with
\begin{align}
v(x,t)=v_0(x) \frac{r_2(t)}{r_2(0)}.
\end{align}
Therefore, we arrive at
\begin{align}
v(x,t) \to \frac{r_2^\ast}{r_2(0)} v_0(x)~~\text{as}~~t \to \infty.
\end{align}
Hence the desired results are proved.~~$\Box$
\end{proof}

As we have just seen in the proof of coexistence above, we deduce that as time goes to infinity, 
\begin{align}
\textstyle \int_{\R} | d(x)-r_1(t)-br_2(t) | u(x,t) dx  \xrightarrow[t \to \infty]{} 0
\end{align}
and
\begin{align}
r_2(t) \big| cr_1(t)+r_2(t)-\overline{m} \big| \xrightarrow[t \to \infty]{} 0.
\end{align}
By virtue of Lemma \ref{odestability} and Lemma \ref{saddle}, it follows from \eqref{uvnonlocal1} that we can then proceed analogously to the proof of Theorem \ref{th1} to derive the following results.
\begin{corollary}\label{coro1}
Under assumptions \er{assum1}, \er{assum20}-\er{assum4}, we assume that $b>0, c>0$, and the function $h(x)$ satisfies
\begin{align*}
\left\{
  \begin{array}{ll}
h(0)=0,~~h(r_1^\ast)=r_2^\ast, \\
x (d_M-x-bh(x)) h'(x)=h(x)(\overline{m}-cx-h(x)).
  \end{array}
\right.
\end{align*}
Then the following statements hold for system \er{uvnonlocal}.
\begin{itemize}
  \item[\textbf{(i)}] If either $c>\frac{\overline{m}}{d_M}, b<\frac{d_M}{\overline{m}}$ or $c>\frac{\overline{m}}{d_M}, b>\frac{d_M}{\overline{m}}$ with $r_2(0)<h(r_1(0))$, then the solutions to \er{uvnonlocal} satisfy
\begin{align}
u(x,t) \rightharpoonup d_M \delta(x-\overline{x}),~~
v(x,t) \to 0,~~\text{as}~~t \to \infty.
\end{align}
  \item[\textbf{(ii)}] If either $c<\frac{\overline{m}}{d_M}, b>\frac{d_M}{\overline{m}}$ or $c>\frac{\overline{m}}{d_M}, b>\frac{d_M}{\overline{m}}$ with $r_2(0)>h(r_1(0))$, then the solutions to \er{uvnonlocal} satisfy
\begin{align}
u(x,t) \rightharpoonup 0,~~
v(x,t) \to  \frac{\overline{m}}{r_2(0)} v_0(x),~~\text{as}~~t \to \infty.
\end{align}
  \item[\textbf{(iii)}] If $bc=1$ and $b \overline{m}=d_M,$ then as time goes to infinity, the solutions to \er{uvnonlocal} satisfy
\begin{align}
u(x,t) \rightharpoonup \bar{r}_1 \delta(x-\overline{x}),~~
v(x,t) \to  \frac{\bar{r}_2}{r_2(0)} v_0(x),~~\text{as}~~t \to \infty.
\end{align}
where $\bar{r}_1,\bar{r}_2$ lies on the line $\bar{r}_1+b \bar{r}_2=d_M$.
\end{itemize}
\end{corollary}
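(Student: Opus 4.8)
The plan is to follow the proof of Theorem~\ref{th1} step by step and to locate precisely where the inequality \er{bcassum} was used, replacing those places by the phase-plane information in Lemma~\ref{odestability} and Lemma~\ref{saddle}. First I would observe that Steps~1--3 of that proof never invoke \er{bcassum}: the a priori bounds \er{r1r2bdd} come from $b,c>0$ and phase-plane analysis of \er{uvnonlocal1}, and the functional $\int_\R\left(d(x)-P(r_1,r_2)\right)u\,dx$ is monotone with time derivative $I_1+I_2\ge0$ for every $b,c>0$. Hence $r_1(t)\to\overline r_1$, $r_2(t)\to\overline r_2$, $\int_\R|d(x)-r_1-br_2|\,u\,dx\to0$ and $r_2|cr_1+r_2-\overline m|\to0$ still hold. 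Since $r_2$ solves the exact logistic-type equation $r_2'=r_2(\overline m-cr_1-r_2)$ with $r_1\to\overline r_1$, comparison with the limiting autonomous logistic equation yields $\overline r_2=(\overline m-c\,\overline r_1)_+$.

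Next I would identify the concentration of $u$ as in Steps~4--6. By \er{ux} the support of $u(\cdot,t)$ stays equal to $\Omega$, and \er{assum4} gives weak-$\ast$ compactness of $(u(\cdot,t))_{t>0}$ when $\Omega$ is unbounded; any limit measure $u^\ast$ is carried by $\{x\in\overline\Omega:\ d(x)=\overline r_1+b\,\overline r_2\}$. A dichotomy of the kind used in Step~4 then shows $\overline r_1+b\,\overline r_2\ge d_M$ in all cases: otherwise $d(x)-r_1(t)-br_2(t)$ stays bounded below by a positive constant on a positive-measure subset of $\Omega$ near $\overline x$, forcing $r_1(t)$ to grow exponentially against \er{r1r2bdd}; and equality holds as soon as $\overline r_1>0$, since for $\overline r_1+b\,\overline r_2>d_M$ the above level set is empty while $u^\ast$ carries mass $\overline r_1>0$. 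Thus either $\overline r_1=0$ and $u(\cdot,t)\rightharpoonup0$, or $\overline r_1>0$ and $u(\cdot,t)\rightharpoonup\overline r_1\,\delta(x-\overline x)$ by \er{assum20}; in both cases $\int_\R u(d_M-d(x))\,dx\to0$, so \er{uvnonlocal1} is an asymptotically autonomous perturbation of \er{ODEs} with $\overline d=d_M$ and $(\overline r_1,\overline r_2)$ must be one of its equilibria $P_1,P_2,P_3$ (the origin being excluded since $r_2\not\to0$). Finally $v(x,t)=v_0(x)\,r_2(t)/r_2(0)\to\frac{\overline r_2}{r_2(0)}v_0(x)$ as in the proof of Theorem~\ref{th1}.

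It then remains to select the correct equilibrium in each regime, which I would do by the Step~4 contradiction argument together with Lemma~\ref{odestability} and Lemma~\ref{saddle} for \er{uvnonlocal1}. When $c>\overline m/d_M$ and $b<d_M/\overline m$, the candidate $(r_1^\ast,r_2^\ast)$ cannot have both coordinates positive and $P_3$ would give $\overline r_1+b\,\overline r_2=b\overline m<d_M$, contradicting the previous paragraph, so only $P_2=(d_M,0)$ survives; symmetrically, when $c<\overline m/d_M$ and $b>d_M/\overline m$ only $P_3=(0,\overline m)$ survives, which gives the first alternatives of (i) and (ii). When $bc=1$ and $b\overline m=d_M$ the lines $\overline r_1+b\,\overline r_2=d_M$ and $c\,\overline r_1+\overline r_2=\overline m$ coincide, so $(\overline r_1,\overline r_2)$ can be any point of that segment, which is precisely (iii). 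In the bistable sub-case $c>\overline m/d_M$, $b>d_M/\overline m$ all three equilibria are admissible, $P_1=(r_1^\ast,r_2^\ast)$ is a saddle, and the function $h$ in the statement is exactly the separatrix of \er{ODEs} with $\overline d=d_M$; I would then argue that $r_2(0)<h(r_1(0))$ places the trajectory of \er{uvnonlocal1} in the basin of $P_2$ and $r_2(0)>h(r_1(0))$ places it in the basin of $P_3$, which yields the second alternatives of (i) and (ii).

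The hard part will be this last identification. The difficulty is that \er{uvnonlocal1} is only a differential inequality in its first component, the defect $\int_\R u(d_M-d(x))\,dx\ge0$ being nonnegative, so it nudges the trajectory toward smaller $r_1$, that is, toward the basin of $P_3$; one cannot simply appeal to invariance of the Lotka--Volterra basins. My approach would be to sandwich $(r_1,r_2)$ between the solutions of \er{ODEs} with $\overline d=d_M$ and with $\overline d=d_m$ issued from $(r_1(0),r_2(0))$, using the competitiveness of \er{uvnonlocal1}, and then to sharpen the coarse $d_m$-bound to the $d_M$-behavior by exploiting that the defect vanishes as $t\to\infty$: once it is small, \er{uvnonlocal1} is a genuine asymptotically autonomous perturbation of the $d_M$-Lotka--Volterra flow, so the trajectory is eventually trapped in the basin of the relevant stable node, and, by the hyperbolicity of the saddle $P_1$ and the local description of its stable manifold obtained in Section~\ref{sec2}, it cannot converge to $P_1$ once $r_2(0)\neq h(r_1(0))$.
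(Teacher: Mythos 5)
Your route is the one the paper itself takes: its entire proof of Corollary~\ref{coro1} consists of the two displayed limits preceding the statement together with the instruction to ``proceed analogously to the proof of Theorem~\ref{th1}'' using Lemmas~\ref{odestability} and~\ref{saddle}. Your expansion of that instruction is faithful and, for the monostable alternatives of (i)--(ii) and for case (iii), essentially correct: you rightly observe that Steps 1--3 of Theorem~\ref{th1} never use \er{bcassum} (the bounds \er{r1r2bdd} follow from $r_1'\le r_1(d_M-r_1)$ and $r_2'\le r_2(\overline m-r_2)$ alone, and the Lyapunov computation only needs $b,c>0$), you correctly replace $\overline r_2=\overline m-c\overline r_1$ by $\overline r_2=(\overline m-c\overline r_1)_+$, and the elimination of the inadmissible equilibria via the dichotomy $\overline r_1+b\overline r_2\ge d_M$ (with equality when $\overline r_1>0$) is exactly the Step-4 argument the paper intends. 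These parts are fine and in fact more carefully argued than the source.

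The genuine gap is the one you flag yourself: the bistable sub-case with the basin criterion $r_2(0)<h(r_1(0))$ or $r_2(0)>h(r_1(0))$ evaluated at $t=0$. The pair $(r_1,r_2)$ solves $r_1'=r_1(\overline d(t)-r_1-br_2)$, $r_2'=r_2(\overline m-cr_1-r_2)$ with effective growth rate $\overline d(t)=r_1^{-1}\int_{\R} d(x)u\,dx\in[d_m,d_M]$, and $\overline d(t)\to d_M$ only if $u$ concentrates at $\overline x$, which is itself part of what is being proved. Your proposed repair --- sandwiching between the $d_m$- and $d_M$-flows and then invoking asymptotic autonomy --- does not close this: the two bounding trajectories issued from the same initial point can land in different basins of the $d_M$-flow whenever $(r_1(0),r_2(0))$ is close to the separatrix (the defect $d_M-\overline d(t)\ge 0$ pushes monotonically toward the basin of $P_3$, as you note), and asymptotic autonomy only controls the trajectory after a transient during which the separatrix may already have been crossed. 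So in alternative (i) with $bc>1$ the stated criterion on the \emph{initial} data cannot be obtained by this argument; one needs either an additional hypothesis (e.g.\ a quantitative lower bound on the distance of $(r_1(0),r_2(0))$ from the graph of $h$ in terms of $\int_{\R}(d_M-d(x))u_0\,dx$, or $u_0$ already concentrated near $\overline x$), or the hypothesis should be imposed on $(r_1(t_0),r_2(t_0))$ for some large $t_0$. Be aware that the paper's one-sentence proof does not address this point either; your analysis correctly isolates the only place where the corollary does not follow from Theorem~\ref{th1} plus the two lemmas.
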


\section{Long-time behavior of the reaction-diffusion equation} \label{sec4}

In this section, we investigate the long-time asymptotic behavior of solutions to the reaction-diffusion system \er{uvnonlocaldiffusion}, with a focus on classifying the parameters $b,c$ to illustrate how diffusion terms (mutations) influence natural selection. Then we provide numerical examples to validate the theoretical predictions.  

\subsection{Asymptotic analysis}\label{sec41}

Under assumptions \er{assum1} and \er{assum2}, with the additional requirement that $m(x) \in L^\infty(\R)$, the Cauchy problem \er{uvnonlocaldiffusion} admits a unique non-negative solution $u(x,t) \in C( \R^+; L^1(\R) ) \cap C^1 ((0,\infty),C^{2,\alpha}(\R)) (0<\alpha<1)$ as established in \cite{P15,L20,PB08}. We now define the stationary equations 
\begin{align}\label{nonlocalsteady}
\left\{
  \begin{array}{ll}
   -\overline{u}''(x)=\overline{u}(x) \left( d(x)-\overline{r}_1-b \overline{r}_2 \right), & x \in \R, \\
   -\overline{v}''(x)=\overline{v}(x) \left( m(x) -c \overline{r}_1- \overline{r}_2 \right), & x\in \R, \\
   \overline{r}_1=\int_{\R} \overline{u}dx,\quad \overline{r}_2=\int_{\R} \overline{v}dx, \\
   \overline{u}(\pm \infty)=\overline{v}(\pm \infty)=0.
  \end{array}
\right.
\end{align}
Then the following results characterize the asymptotic behavior of dynamical solutions to \er{uvnonlocaldiffusion}.

\begin{theorem}\label{nonlocaldiffusion}
Let $b>0, c>0.$ The function $h(x)$ satisfy 
\begin{align*}
\left\{
  \begin{array}{ll}
    h(0)=0,~~h(\overline{r}_1)=\overline{r}_2, \\
x (\overline{r}_1+b \overline{r}_2-x-bh(x)) h'(x)=h(x)(c \overline{r}_1+ \overline{r}_2-cx-h(x)).
  \end{array}
\right.
\end{align*}
$K_1, K_2$ are defined as 
\begin{align}
K_1 \int_{\R} \overline{u}^2(x) dx=\int_{\R} \overline{u}(x) u_0(x) dx, \\
K_2 \int_{\R} \overline{v}^2(x) dx=\int_{\R} \overline{v}(x) v_0(x) dx. 
\end{align}
Under assumption \er{assum1}, we assume that $d(x) \in L^\infty(\R), m(x) \in L^\infty(\R)$. Then the following statements hold for system \er{uvnonlocaldiffusion}.
\begin{itemize}
  \item[\textbf{(i)}] If $bc<1$, then
  \begin{align}
     u(x,t) \xrightarrow[t \to \infty]{} \overline{u}(x) \quad \text{and}\quad v(x,t) \xrightarrow[t \to \infty]{} \overline{v}(x)\quad \text{in}\quad L^\infty(\R).
  \end{align}  
  \item[\textbf{(ii)}] For $bc>1$. If 
  \begin{align}\label{diffusionK2xiaoyuK1}
  K_2 \overline{r}_2<h (K_1 \overline{r}_1),
  \end{align}
  then
  \begin{align}
u(x,t) \xrightarrow[t \to \infty]{} \frac{\overline{r}_1+b \overline{r}_2}{\overline{r}_1} \overline{u}(x), \quad v(x,t) \xrightarrow[t \to \infty]{}  0~~\text{in}~~L^\infty(\R).
\end{align}  
If 
  \begin{align}\label{diffusionK2dayuK1}
  K_2 \overline{r}_2>h (K_1 \overline{r}_1),
  \end{align}
  then 
  \begin{align}
u(x,t) \xrightarrow[t \to \infty]{} 0,\quad v(x,t) \xrightarrow[t \to \infty]{}  \frac{c \overline{r}_1+ \overline{r}_2}{\overline{r}_2} \overline{v}(x)~~\text{in}~~L^\infty(\R).
\end{align}  
\end{itemize}
\end{theorem}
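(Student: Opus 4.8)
The plan is to remove the (purely time-dependent) nonlocal competition factors by a substitution that turns \er{uvnonlocaldiffusion} into two \emph{decoupled linear} parabolic equations, read off their large-time asymptotics from the spectral data encoded in the stationary problem \er{nonlocalsteady}, and then feed the result into a planar Lotka--Volterra system for the total masses, to which Lemma \ref{odestability} and Lemma \ref{saddle} apply.

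\textbf{Step 1 (Transformation).} Write $u(x,t)=e^{\phi(t)}w(x,t)$ and $v(x,t)=e^{\psi(t)}z(x,t)$ with $\phi(0)=\psi(0)=0$ and $\phi'(t)=-(r_1(t)+br_2(t))$, $\psi'(t)=-(cr_1(t)+r_2(t))$. A direct substitution shows $w,z$ solve the \emph{linear} equations $w_t=\Delta w+d(x)w$ and $z_t=\Delta z+m(x)z$ with $w(\cdot,0)=u_0$, $z(\cdot,0)=v_0$; conversely, solving these linear equations first and then the ODE system for $(\phi,\psi)$ (whose right-hand side involves only $\int w$, $\int z$), the uniqueness statement for \er{uvnonlocaldiffusion} identifies $e^{\phi}w$, $e^{\psi}z$ with the given solution. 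This is the transformation that defuses resource heterogeneity: the potentials $d(x),m(x)$ now sit in linear self-adjoint operators. I would also record the elementary a priori bounds $0<r_i(t)\le C$ (integrate the equations over $\R$, drop the negative cross term, and use a logistic comparison), which will be used below.

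\textbf{Step 2 (Linear large-time asymptotics).} The stationary system \er{nonlocalsteady} says precisely that $\overline{u}>0$ is an eigenfunction of $-\Delta-d(x)$ with eigenvalue $-(\overline{r}_1+b\overline{r}_2)$, and $\overline{v}>0$ one of $-\Delta-m(x)$ with eigenvalue $-(c\overline{r}_1+\overline{r}_2)$; positivity forces these to be the principal eigenvalues, and since $\overline{u},\overline{v}$ vanish at $\pm\infty$ they lie strictly below the essential spectrum, so there is a spectral gap. Standard self-adjoint semigroup theory then gives $e^{-(\overline{r}_1+b\overline{r}_2)t}w(\cdot,t)\to K_1\overline{u}$ and $e^{-(c\overline{r}_1+\overline{r}_2)t}z(\cdot,t)\to K_2\overline{v}$ as $t\to\infty$, the coefficients being the $L^2$-projections of $u_0,v_0$ onto $\overline{u},\overline{v}$, which are exactly $K_1,K_2$ by their defining relations. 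I would carry this convergence in $L^1\cap L^\infty$ — exponential decay of the complementary spectral part, upgraded to $L^\infty$ by interior parabolic regularity since $d,m\in L^\infty$, and controlled near infinity via the eigenfunction decay — so that it may be integrated in $x$. \emph{This is the main analytic obstacle}: making the large-time asymptotics of a linear parabolic flow rigorous on the whole line with a merely bounded potential, in topologies strong enough to control $\|w(\cdot,t)\|_{L^1}$ and $\|w(\cdot,t)\|_{L^\infty}$ simultaneously.

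\textbf{Step 3 (Reduction to Lotka--Volterra and conclusion).} Set $P(t):=e^{\phi(t)+(\overline{r}_1+b\overline{r}_2)t}$ and $Q(t):=e^{\psi(t)+(c\overline{r}_1+\overline{r}_2)t}$, so Step 2 gives $u(\cdot,t)=P(t)(K_1\overline{u}+o(1))$ and $v(\cdot,t)=Q(t)(K_2\overline{v}+o(1))$ in $L^\infty$, hence $r_1(t)=P(t)(K_1\overline{r}_1+o(1))$, $r_2(t)=Q(t)(K_2\overline{r}_2+o(1))$; the a priori bounds make $P,Q$ bounded. Differentiating $\log P,\log Q$ and inserting $\phi',\psi'$ shows that $\xi(t):=K_1\overline{r}_1P(t)$, $\eta(t):=K_2\overline{r}_2Q(t)$ satisfy, up to perturbations vanishing as $t\to\infty$, the system $\xi'=\xi(\overline{d}-\xi-b\eta)$, $\eta'=\eta(\overline{m}-c\xi-\eta)$ with $\overline{d}:=\overline{r}_1+b\overline{r}_2$, $\overline{m}:=c\overline{r}_1+\overline{r}_2$, whose coexistence equilibrium is $(\overline{r}_1,\overline{r}_2)$ and whose initial point is $(K_1\overline{r}_1,K_2\overline{r}_2)$ since $\phi(0)=\psi(0)=0$. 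One checks $bc<1\iff c<\overline{m}/\overline{d}$ and $b<\overline{d}/\overline{m}$, while $bc>1$ gives the bistable regime of Lemma \ref{odestability}(v) and Lemma \ref{saddle}, whose separatrix is the graph $\eta=h(\xi)$ of the function $h$ in the statement (its ODE and normalization $h(0)=0$, $h(\overline{r}_1)=\overline{r}_2$ are exactly Lemma \ref{saddle} with these $\overline{d},\overline{m}$). Since in the bistable case \er{diffusionK2xiaoyuK1} resp.\ \er{diffusionK2dayuK1} places $(K_1\overline{r}_1,K_2\overline{r}_2)$ \emph{strictly} off the separatrix, an asymptotically-autonomous perturbation argument (the perturbation decays and the limiting flow is structurally stable away from the separatrix, so the perturbed orbit cannot approach it) shows $(\xi,\eta)$ has the same limit as the unperturbed orbit: $(\overline{r}_1,\overline{r}_2)$ when $bc<1$; $(\overline{d},0)$ when $bc>1$ and \er{diffusionK2xiaoyuK1} holds; $(0,\overline{m})$ when $bc>1$ and \er{diffusionK2dayuK1} holds. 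Translating back via $P=\xi/(K_1\overline{r}_1)$, $Q=\eta/(K_2\overline{r}_2)$ and $u=P(K_1\overline{u}+o(1))$, $v=Q(K_2\overline{v}+o(1))$ yields, respectively, $u\to\overline{u}$, $v\to\overline{v}$; $u\to\frac{\overline{r}_1+b\overline{r}_2}{\overline{r}_1}\overline{u}$, $v\to0$; and $u\to0$, $v\to\frac{c\overline{r}_1+\overline{r}_2}{\overline{r}_2}\overline{v}$, all in $L^\infty(\R)$ — the assertions of the theorem. Apart from Step 2, the only point requiring care here is this asymptotically-autonomous reduction in the bistable case, where the strict inequalities are exactly what is needed to keep the trajectory away from a neighbourhood of the separatrix.
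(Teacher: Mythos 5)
Your proposal is correct and follows essentially the same route as the paper: factor out the time-dependent nonlocal terms to obtain a linear parabolic problem whose large-time behavior is governed by the principal eigenfunctions $\overline{u},\overline{v}$ (your $P,Q$ are exactly the paper's amplitude factors $w(t),z(t)$, and your $(\xi,\eta)=(K_1\overline{r}_1P,K_2\overline{r}_2Q)$ is precisely the asymptotically autonomous Lotka--Volterra system the paper writes down before invoking Lemma \ref{odestability} and Lemma \ref{saddle}). If anything, you are more explicit than the paper about the two points it leaves implicit — the $L^1\cap L^\infty$ spectral convergence for the linear flow and the asymptotically-autonomous perturbation argument keeping the orbit off the separatrix in the bistable case.
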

\begin{proof}
We first consider non-negative solutions $\tilde{u}(x,t), \tilde{v}(x,t)$ to the linear system
\begin{align}\label{star1}
\left\{
  \begin{array}{ll}
 \partial_t \tilde{u} = \Delta \tilde{u} + \tilde{u}\left(d(x) - \overline{r}_1 - b \overline{r}_2\right), & x \in \mathbb{R},\ t \geq 0, \\
\partial_t \tilde{v} = \Delta \tilde{v} + \tilde{v}\left(\overline{m} - c \overline{r}_1 - \overline{r}_2\right), & x \in \mathbb{R},\ t \geq 0, \\
\tilde{u}(x,0) = u_0(x), \quad \tilde{v}(x,0) = v_0(x), \\
\tilde{u}(\pm\infty,t) = \tilde{v}(\pm\infty,t) = 0, & t \geq 0.
  \end{array}
\right.
\end{align}
This is a standard parabolic system with zero as the principle eigenvalue of the steady equations. We know from the general theory of dominant eigenvectors of positive operators (Krein-Rutman theorem and relative entropy \cite{L20,P15}) that 
\begin{align}
\tilde{u}(x,t) \xrightarrow[t \to \infty]{} K_1 \overline{u}(x)~~\text{in}~~L^\infty(\R)~~\text{for some}~~K_1>0, \label{star2} \\
\tilde{v}(x,t) \xrightarrow[t \to \infty]{} K_2 \overline{v}(x)~~\text{in}~~L^\infty(\R)~~\text{for some}~~K_2>0, \label{star3}
\end{align}
Furthermore, multiplying the first equation of \er{star1} by $\overline{u}(x)$ we find
\begin{align*}
\frac{d}{dt} \int_{\R} \overline{u}(x) \tilde{u}(x,t) dx &=\overline{u}(x) \left(\Delta \tilde{u}+\tilde{u}\left(d(x)-\overline{r}_1-b \overline{r}_2  \right) \right) \\
&=\tilde{u} \left( \Delta \overline{u}(x) +\overline{u}(x) \left(d(x)-\overline{r}_1-b \overline{r}_2  \right) \right) \\
&=0,
\end{align*}
which yields $\int_{\R} \overline{u}(x) \tilde{u}(x,t) dx=\int_{\R} \overline{u}(x) u_0(x) dx$ for any $t>0.$ Letting $t \to \infty$ we arrive at
\begin{align}\label{K1}
K_1 \int_{\R} \overline{u}^2(x) dx=\int_{\R} \overline{u}(x) u_0(x) dx.
\end{align}
Similarly, multiplying $\overline{v}(x)$ to the second equation of \er{star1} gives
\begin{align}\label{K2}
K_2 \int_{\R} \overline{v}^2(x) dx=\int_{\R} \overline{v}(x) v_0(x) dx.  
\end{align}

On the other hand, we look for solutions to \er{uvnonlocaldiffusion} of the form
\begin{align}
u(x,t)=w(t) \tilde{u}(x,t),\quad v(x,t)=z(t) \tilde{v}(x,t)
\end{align}
with $w(t) \ge 0, z(t) \ge 0.$ A direct computation shows
\begin{align*}
&\frac{\partial}{\partial t} u(x,t)-\Delta u-u \left( d(x)-r_1(t)-b r_2(t)  \right) \\
= & w'(t) \tilde{u}(x,t)+w(t) \tilde{u}(x,t) \left( r_1(t)-\overline{r}_1+b r_2(t)-b \overline{r}_2 \right)=0
\end{align*}
and
\begin{align*}
&\frac{\partial}{\partial t} v(x,t)-\Delta v-v \left( m(x)-c r_1(t)- r_2(t)  \right) \\
= & z'(t) \tilde{v}(x,t)+z(t) \tilde{v}(x,t) \left( c r_1(t)-c \overline{r}_1+ r_2(t)- \overline{r}_2 \right)=0,
\end{align*}
which allow us to find $w(t), z(t)$ by the equations
\begin{align}
\left\{
  \begin{array}{ll}
    w'(t)+w(t)\left( r_1(t)-\overline{r}_1+b r_2(t)-b \overline{r}_2 \right)=0, \\
    z'(t)+z(t)\left( c r_1(t)-c \overline{r}_1+ r_2(t)- \overline{r}_2 \right)=0, \\
w(0)=1,\quad z(0)=1.
  \end{array}
\right.
\end{align}
Since 
\begin{align}
r_1(t)=\int_{\R} u(x,t)dx=w(t) \int_{\R} \tilde{u}(x,t)dx=w(t) \lambda_1(t)
\end{align}
with
\begin{align}
\lambda_1(t)=\int_{\R} \tilde{u}(x,t) dx \xrightarrow[t \to \infty]{} K_1 \overline{r}_1
\end{align}
and 
\begin{align}
r_2(t)=\int_{\R} v(x,t)dx=z(t) \int_{\R} \tilde{v}(x,t)dx=z(t) \lambda_2(t)
\end{align}
with
\begin{align}
\lambda_2(t)=\int_{\R} \tilde{v}(x,t) dx \xrightarrow[t \to \infty]{} K_2 \overline{r}_2,
\end{align}
we reformulate the system as:
\begin{align}\label{wz}
\left\{
  \begin{array}{ll}
    w'(t)+w(t)\left( w(t)\lambda_1(t)+b z(t) \lambda_2(t)-\overline{r}_1-b \overline{r}_2 \right)=0, \\
z'(t)+z(t)\left(c w(t)\lambda_1(t)+ z(t) \lambda_2(t)-c \overline{r}_1- \overline{r}_2 \right)=0, \\
w(0)=z(0)=1.
  \end{array}
\right.
\end{align}
Equivalently, the ODEs \er{wz} can be rewritten as
\begin{align}
\left\{
  \begin{array}{ll}
    K_1 \overline{r}_1w'(t)+ K_1 \overline{r}_1 w(t)\left( w(t) K_1 \overline{r}_1+b z(t) K_2 \overline{r}_2-\overline{r}_1-b \overline{r}_2 \right)=-K_1 \overline{r}_1 w(t)f_1(t), \\
    K_2 \overline{r}_2z'(t)+K_2 \overline{r}_2z(t)\left(c w(t)K_1 \overline{r}_1+ z(t)K_2 \overline{r}_2 -c \overline{r}_1- \overline{r}_2 \right)=-K_2 \overline{r}_2 z(t)f_2(t), \\
w(0)=z(0)=1
  \end{array}
\right.
\end{align}
with the functions
\begin{align*}
f_1(t)&=w(t) \left( \lambda_1(t)-K_1 \overline{r}_1  \right)+b z(t) \left( \lambda_2(t)-K_2 \overline{r}_2 \right), \\
f_2(t)&=c w(t) \left( \lambda_1(t)-K_1 \overline{r}_1  \right)+ z(t) \left( \lambda_2(t)-K_2 \overline{r}_2 \right)
\end{align*}
being such that $f_1(t) \to 0, f_2(t) \to 0$ as $t \to \infty.$ By applying case $(i)$ of Lemma \ref{odestability}, we conclude that for $bc<1,$ the inequalities 
$$b(c \overline{r}_1+ \overline{r}_2)<\overline{r}_1+b \overline{r}_2 ~~\text{and}~~ c(\overline{r}_1+b \overline{r}_2)<c \overline{r}_1+ \overline{r}_2 $$ 
hold, which imply
\begin{align}
w(t) \xrightarrow[t \to \infty]{} \frac{1}{K_1}, \quad z(t) \xrightarrow[t \to \infty]{} \frac{1}{K_2}.
\end{align}
Here $K_1, K_2$ are defined according to \er{K1} and \er{K2}, respectively. Recalling that $u(x,t)=w(t)\tilde{u}(x,t)$, we obtain
\begin{align}
&u(x,t) \xrightarrow[t \to \infty]{} \overline{u}(x) ~~\text{in}~~L^\infty(\R), \\
&v(x,t) \xrightarrow[t \to \infty]{} \overline{v}(x)~~\text{in}~~L^\infty(\R)
\end{align}
after using \er{star2} and \er{star3}. 

Now we turn to the case $bc>1$, which implies
$$b(c \overline{r}_1+ \overline{r}_2)>\overline{r}_1+b \overline{r}_2~~\text{and}~~ c(\overline{r}_1+b \overline{r}_2)>c \overline{r}_1+ \overline{r}_2.$$ 
By case $(v)$ of Lemma 
\ref{odestability}, we deduce that if $K_2 \overline{r}_2 z(0)>h(K_1 \overline{r}_1 w(0))$, then
\begin{align}
w(t) \xrightarrow[t \to \infty]{} 0, \quad z(t) \xrightarrow[t \to \infty]{} \frac{c \overline{r}_1+ \overline{r}_2}{K_2 \overline{r}_2}
\end{align}
and consequently
\begin{align}
u(x,t) \xrightarrow[t \to \infty]{} 0,\quad v(x,t) \xrightarrow[t \to \infty]{}  \frac{c \overline{r}_1+ \overline{r}_2}{\overline{r}_2} \overline{v}(x)~~\text{in}~~L^\infty(\R).
\end{align}
While if $K_2 \overline{r}_2 z(0)<h(K_1 \overline{r}_1 w(0))$, then
\begin{align}
w(t) \xrightarrow[t \to \infty]{} \frac{\overline{r}_1+b \overline{r}_2}{K_1 \overline{r}_1}, \quad z(t) \xrightarrow[t \to \infty]{} 0,
\end{align}
which leads to
\begin{align}
u(x,t) \xrightarrow[t \to \infty]{} \frac{\overline{r}_1+b \overline{r}_2}{\overline{r}_1} \overline{u}(x), \quad v(x,t) \xrightarrow[t \to \infty]{}  0~~\text{in}~~L^\infty(\R)
\end{align}
after applying \er{star2} and \er{star3}. This is the anticipated result.
\end{proof}



\end{document}